\newtheorem{thm}{Theorem}
\newtheorem{cor}[thm]{Corollary}
\newtheorem{Def}{Definition}[section]
\newtheorem{Ex}{Example}[section]
\numberwithin{equation}{section}
\journal{Journal of \LaTeX\ Templates}
\begin{document}

\begin{frontmatter}

\title{ Solving Volterra Integro-Differential Equations involving Delay: A New Higher Order Numerical Method}


\author{Aman Jhinga$^{a}$ }
\ead{jhinga.aman@gmail.com}
\author{Jayvant Patade$^{a,b}$ }
\ead{dr.jayvantpatade@gmail.com}
\author{Varsha Daftardar-Gejji$^{a*}$ }
\ead{vsgejji@gmail.com}
\address{$^{a}$Department of Mathematics, Savitribai Phule Pune University, Pune - 411007, India\\ $^{b}$Department of Mathematics, Jaysingpur College, Jaysingpur - 416101, India.}


\cortext[mycorrespondingauthor]{Corresponding author}
\begin{abstract}
The aim of the present paper is to introduce a new numerical method for solving nonlinear Volterra integro-differential equations involving delay.  We apply  trapezium rule to  the integral involved in the equation. Further, Daftardar-Gejji and Jafari method (DGJ) is employed to solve the implicit equation. Existence-uniqueness theorem is derived for solutions of such equations and the error and convergence analysis of the proposed method is presented. We illustrate efficacy of the newly proposed method by constructing examples.
\end{abstract}
\begin{keyword}
Volterra integro-differential equations, delay, Trapezium rule, Daftardar-Gejji and Jafari method, numerical solution, error, convergence.
\MSC[2010] 34K28; 45J05; 65L03; 65L05.
\end{keyword}
\end{frontmatter}

\section{Introduction}
During the last few decades, Volterra integro-differential equations (VIDEs) are widely used for mathematical modeling of various physical and biological phenomena. In this paper we  deal with VIDEs that incorporate memory effect. Such  VIDEs are useful in mathematical modeling of hereditary phenomena. The study of  Volterra delay-integro-differential equations (VDIDEs) has been an active area of research. It is found that VDIDEs are more effective than standard VIDEs in modeling the real-life phenomena. Hence researchers have developed theoretical and numerical analysis of VDIDEs. For example, Brunner \cite{brunner1989numerical} has given a survey of some recent developments in the numerical treatment of VDIDEs. Stability and boundedness of the solutions are discussed by Cemil Tunc \cite{tuncc2016new}. Zhang \textit{et al.} \cite{zhang2006general} have discussed general linear methods for solving VDIDEs. Explicit and implicit  Runge-Kutta methods for solving neutral Volterra integro-differential equations with delay have been developed by Enright \textit{et al.} \cite{enright1997continuous}.  Readers may refer to \cite{shakourifar2008numerical,patade2015new,patade2020novel} for more details in this regard.
\par In the present paper we consider VDIDE of the following form:
\begin{equation}\label{vide}
u'(x) = g(x,u(x)) + \int_{x_0}^x K(x,t, u(t-\tau))dt, \quad u(x)=\phi(x) \text{ for } x \in [-\tau,x_0].
\end{equation}
We employ a decomposition method proposed by Daftardar-Gejji and Jafari \cite{daftardar2006iterative} to generate a new, accurate and fast numerical method for solving nonlinear VDIDEs. 

The paper is organized as follows: Preliminaries are given in section \ref{Pre}. A new numerical method is presented in Section \ref{nnm}. Analysis of this numerical method is given in Section \ref{analy}. Section \ref{example}  deals with different types of illustrative examples. Conclusions are summarized in Section \ref{concl}.

\section{Preliminaries}\label{Pre}
\subsection{Basic Definitions and Notations}
In this section, we discuss some basic definitions and results.
\begin{Def}\cite{linz1969method}\label{1.11}
	Let $u_i$  denote the approximation to the exact value $u(x_i)$ obtained by a given method with step-size $h$. Then\\
	\textbf{(i)} a method is said to be convergent if and only if
	\begin{equation}
	\lim\limits_{h\rightarrow 0} \mid u(x_i)-u_i\mid\rightarrow 0,\quad i=1,2\cdots N.\label{1.1}
	\end{equation}
	\textbf{(ii)} a method is said to be of order $m$ if $m$ is the largest number for which there exists a finite constant $ \textsc{C}$ such that
	\begin{equation}
	\mid u(x_i)-u_i\mid\leq \textsc{C}h^m,\quad i=1,2\cdots N.\label{1.2}
	\end{equation}
\end{Def}

\subsection{Daftardar-Gejji and Jafari Method}\label{djm}
A new iterative method was introduced by Daftardar-Gejji and Jafari (DGJ) (2006) \cite{daftardar2006iterative} for solving nonlinear functional equations of the form
\begin{equation}
u = g + L(u) + N(u),\label{2.1}
\end{equation}
where $g$ is a known function, $L$ and $N$ are linear and nonlinear operators respectively. DGJ  method provides  a  solution to Eq.(\ref{2.1}) in the form of a series of the form
\begin{equation}
u= \sum_{i=0}^\infty u_i, \label{2.2}
\end{equation}
where
\begin{eqnarray}
u_0 &= & g, \nonumber\\
u_{m+1} &=& L(u_m) + G_m,\quad m=0,1, 2, \cdots,\label{2.6}\\
\textrm{and}\quad G_m &=& N\left(\sum_{j=0}^m u_m\right)- N\left(\sum_{j=0}^{m-1} u_m\right), m\geq 1.\nonumber
\end{eqnarray}
The $k$-term approximate solution is given by
\begin{eqnarray}
u = \sum_{i=0}^{k-1} u_i
\end{eqnarray}
for suitable integer $k$. DGJ method has been employed by many researchers to develop new numerical methods \cite{jhinga2018new,jhinga2019new,kumar2020new} for solving differential equations.

\section{Numerical Method}\label{nnm}
In the present section we construct a new numerical method based on DGJ decomposition to solve Volterra delay-integro-differential equations (VDIDEs) of the following form:
\begin{align}
u'(x) &= g(x,u(x)) + \int_{x_0}^x K(x,t, u(t-\tau))dt, \quad u(x_0)=u_0, \label{vd} \\
u(x)& = \phi (x) \text{ for } x \in [-\tau, x_0]. \label{vd2}
\end{align}
Consider a uniform partition of the interval $[-\tau,X]$ with grid points $x_j=jh:j=-M,-M+1,\dots,-1,0,1,\dots,N$ where $M$ and $N$ are integers such that $N=X/h$ and $M=\tau/h.$ \\
Integrating  Eq. (\ref{vd}) from $x=x_j $ to $x=x_j+h$, we get
\begin{equation}
u({x_j+h}) = u(x_j) + \int_{x_j}^{x_j+h} g(x,u(x)) dx + \int_{x_j}^{x_j+h}  \int_{x_0}^x K(x,t, u(t-\tau))dt dx \label{2}
\end{equation}
Applying trapezium rule \cite{jainnumerical} to evaluate integrals on right of Eq. (\ref{2}), we obtain
\begin{eqnarray}
u({x_j+h}) &=& u(x_j) + \frac{h}{2} g(x_j, u_j) + \frac{h^2}{4} \big( K(x_j, x_0, u(x_0-\tau)) + K(x_j, x_j, u(x_j-\tau))\nonumber\\
&& + K(x_{j+1}, x_0, u(x_0-\tau))\big)+ \frac{h^2}{2}\bigg( \sum_{i=1}^{j-1} K(x_j, x_i, u(x_i-\tau)) \nonumber\\
&& +  \sum_{i=1}^j K(x_{j+1}, x_i, u(x_i-\tau))\bigg) + \frac{h}{2} g(x_{j+1}, u_{j+1})+ O(h^3)\nonumber\\
&&+ \frac{h^2}{4}  K(x_{j+1}, x_{j+1}, u(x_{j+1}-\tau)).\label{3}
\end{eqnarray}
If $u_j$ denotes approximation to $u(x_j)$, then approximate solution at $x=x_j$ is given by
\begin{eqnarray}
u_{j+1} &=& u_j + \frac{h}{2} g(x_j, u_j) + \frac{h^2}{4} \left( K(x_j, x_0, u(x_{-M})) + K(x_j, x_j, u(x_{j-M})) + K(x_{j+1}, x_0, u(x_{-M}))\right)\nonumber\\
&& + \frac{h^2}{2}\left( \sum_{i=1}^{j-1} K(x_j, x_i, u(x_{i-M})) +  \sum_{i=1}^j K(x_{j+1}, x_i, u(x_{i-M}))\right) + \frac{h}{2} g(x_{j+1}, u_{j+1}) \nonumber\\
&& + \frac{h^2}{4}  K(x_{j+1}, x_{j+1}, u(x_{j+1-M})).\label{4.1}
\end{eqnarray}
where the delay term is approximated as given below:
\begin{eqnarray}
u(x_j-\tau) &=& u(jh-Mh)=u((j-M)h)=u(x_{j-M}), j=0,1,\dots,N\\
\text{ and }u(x_j) &=& \phi(x_j), ~ ~ j=-M,-M+1,\dots,0.
\end{eqnarray}
Eq. (\ref{4.1}) is of the form (\ref{2.1}),
where
\begin{eqnarray*}
	u &=& u_{j+1},\nonumber\\
	g &=& u_j + \frac{h}{2} g(x_j, u_j) + \frac{h^2}{4} \left( K(x_j, x_0, u(x_{-M})) + K(x_j, x_j, u(x_{j-M})) + K(x_{j+1}, x_0, u(x_{-M}))\right)\nonumber\\
	&& + \frac{h^2}{2}\left( \sum_{i=1}^{j-1} K(x_j, x_i, u(x_{i-M})) +  \sum_{i=1}^j K(x_{j+1}, x_i, u(x_{i-M}))\right)  \nonumber\\
	&& + \frac{h^2}{4}  K(x_{j+1}, x_{j+1}, u(x_{j+1-M})),\\
	N(u) &=& \frac{h}{2} g(x_{j+1}, u_{j+1}).
\end{eqnarray*}
Applying DGJ method to obtain 3-term approximate solution of eq. (\ref{4.1}), we get 
\begin{eqnarray}
u_{j+1} &=& u_j + \frac{h}{2} g(x_j, u_j) + \frac{h^2}{4} \bigg( K(x_j, x_0, u_{-M}) + K(x_j, x_j, u_{j-M}) + K(x_{j+1}, x_0, u_{-M})\bigg)\nonumber\\
&& + \frac{h^2}{2}\bigg( \sum_{i=1}^{j-1} K(x_j, x_i, u_{i-M}) +  \sum_{i=1}^j K(x_{j+1}, x_i, u_{i-M})\bigg)+ \frac{h^2}{4}  K(x_{j+1}, x_{j+1}, u(x_{j+1-M}))\nonumber\\
&& + \frac{h}{2}g\bigg(x_{j+1},  u_j + \frac{h}{2} g(x_j, u_j) + \frac{h^2}{4} \bigg( K(x_j, x_0, u_{-M}) + K(x_j, x_j, u_{j-M}) + K(x_{j+1}, x_0, u_{-M})\bigg)\nonumber\\
&& + \frac{h^2}{2}\bigg( \sum_{i=1}^{j-1} K(x_j, x_i, u_{i-M}) +  \sum_{i=1}^j K(x_{j+1}, x_i, u_{i-M})\bigg)+ \frac{h^2}{4}  K(x_{j+1}, x_{j+1}, u(x_{j+1-M}))\nonumber\\
&&+ \frac{h}{2}g\bigg(x_{j+1},  u_j + \frac{h}{2} g(x_j, u_j) + \frac{h^2}{4} \bigg( K(x_j, x_0, u_{-M}) + K(x_j, x_j, u_{j-M}) + K(x_{j+1}, x_0, u_{-M})\bigg)\nonumber\\
&& + \frac{h^2}{2}\bigg( \sum_{i=1}^{j-1} K(x_j, x_i, u_{i-M}) +  \sum_{i=1}^j K(x_{j+1}, x_i, u_{i-M})\bigg)+ \frac{h^2}{4}  K(x_{j+1}, x_{j+1}, u(x_{j+1-M}))\bigg)\bigg).\nonumber \label{5}
\end{eqnarray}
If we set
\begin{eqnarray}
M_1 &=& u_j + \frac{h}{2} g(x_j, u_j) + \frac{h^2}{4} \bigg( K(x_j, x_0, u_{-M}) + K(x_j, x_j, u_{j-M}) + K(x_{j+1}, x_0, u_{-M})\bigg)\nonumber\\
&& + \frac{h^2}{2}\bigg( \sum_{i=1}^{j-1} K(x_j, x_i, u_{i-M}) +  \sum_{i=1}^j K(x_{j+1}, x_i, u_{i-M})\bigg)+ \frac{h^2}{4}  K(x_{j+1}, x_{j+1}, u(x_{j+1-M})),\nonumber\\ \label{6}
M_2 &=& M_1 +  \frac{h}{2} g(x_{j+1}, M_1), \nonumber\label{7}
\end{eqnarray}
then equation (\ref{5}) becomes
\begin{eqnarray}
u_{j+1}=M_1 + \frac{h}{2} g(x_{j+1}, M_2),\label{9}
\end{eqnarray}
which is our new numerical method (NNM) for solving VDIDEs of the form (\ref{vd})-(\ref{vd2}).

\section{Analysis of Numerical Method}\label{analy}
\subsection{Existence and Uniqueness Theorem}\label{existnce}
The following result is generalization of Theorem (1) in \cite{patade2020novel}.

\begin{thm}\label{11}
	Consider the Volterra integro-differential equation
	\begin{eqnarray}
	u'(x) &=& g(x,u(x)) + \int_{x_0}^x K(x, t,u(t-\tau)) dt,\label{1}\\
	u(x) &=& \phi(x) ~ \text{ for } x \in [-\tau,x_0].\nonumber
	\end{eqnarray}
	Assume that  $g$ and $K$ are continuous and satisfy Lipschitz condition
	\begin{eqnarray}
	\parallel g(x, u_1)-g(x, u_2)\parallel &\leq& L_1 \parallel u_1-u_2 \parallel \\
	\parallel K(x, t, u_1(t-\tau))-K(x, t, u_2(t-\tau))\parallel &\leq& L_2 \parallel u_1(t-\tau)-u_2(t-\tau) \parallel
	\end{eqnarray}
	for every $\mid x-x_0\mid \leq a,  \mid t-x_0\mid \leq a, \parallel u_1 \parallel < \infty, \parallel u_2 \parallel < \infty\quad \textrm{and}\quad a>0$.
	Then the IVP (\ref{1}) has unique solution.
\end{thm}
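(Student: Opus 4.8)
The plan is to recast the initial value problem as a fixed-point equation and invoke the Banach contraction principle (Picard iteration). First I would integrate Eq.~(\ref{1}) from $x_0$ to $x$ and use the prescribed history to obtain the equivalent Volterra integral equation
\begin{equation*}
u(x) = \phi(x_0) + \int_{x_0}^x g(s,u(s))\,ds + \int_{x_0}^x\!\!\int_{x_0}^s K(s,t,u(t-\tau))\,dt\,ds,\qquad x\in[x_0,x_0+a],
\end{equation*}
together with $u(x)=\phi(x)$ on $[-\tau,x_0]$. A continuous function solves the IVP on $[x_0,x_0+a]$ if and only if it is a fixed point of the operator $T$ given by the right-hand side, so it suffices to show that $T$ has a unique fixed point.

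Next I would fix the function space. Let $E=C([x_0,x_0+a])$ be equipped with the sup-norm $\|\cdot\|_\infty$; this is a complete metric space. Since $g$ and $K$ are continuous and $\phi$ is prescribed on $[-\tau,x_0]$, $T$ maps $E$ into itself. The heart of the argument is the Lipschitz estimate for $u_1,u_2\in E$:
\begin{align*}
|(Tu_1)(x)-(Tu_2)(x)| &\le \int_{x_0}^x |g(s,u_1(s))-g(s,u_2(s))|\,ds \\
&\quad + \int_{x_0}^x\!\!\int_{x_0}^s |K(s,t,u_1(t-\tau))-K(s,t,u_2(t-\tau))|\,dt\,ds.
\end{align*}
Applying the two Lipschitz hypotheses, and observing that the delayed argument $u_i(t-\tau)$ coincides with the fixed history $\phi(t-\tau)$ whenever $t-\tau<x_0$ (so those contributions cancel), the right-hand side is bounded by $L_1\int_{x_0}^x\|u_1-u_2\|_\infty\,ds + L_2\int_{x_0}^x (s-x_0)\|u_1-u_2\|_\infty\,ds$, which yields
\begin{equation*}
\|Tu_1-Tu_2\|_\infty \le \Big(L_1 a + \tfrac{L_2 a^2}{2}\Big)\,\|u_1-u_2\|_\infty.
\end{equation*}

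Finally, choosing $a$ small enough that $L_1 a + L_2 a^2/2 < 1$ makes $T$ a contraction, and the Banach fixed-point theorem delivers a unique solution in $E$. Because the constants $L_1,L_2$ and the admissible step $a$ do not depend on where the subinterval sits, the local solution is continued step by step across $[x_0,X]$, producing a unique global solution. The step I expect to be the main obstacle is the careful handling of the delay inside the double Volterra integral: one must verify that differences of the delayed arguments vanish on the history interval and are otherwise controlled by $\|u_1-u_2\|_\infty$, so that the double integral contributes only the $O(a^2)$ term rather than an uncontrolled quantity. If one prefers a contraction valid on the entire interval at once, thereby bypassing continuation, the same estimate combined with a weighted Bielecki norm $\|u\|_\lambda=\sup_x e^{-\lambda(x-x_0)}|u(x)|$, or the factorial-decay bound $\|T^n u_1-T^n u_2\|_\infty\le \frac{(Ca)^n}{n!}\|u_1-u_2\|_\infty$ for a high iterate $T^n$, gives the conclusion directly.
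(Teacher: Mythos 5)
Your proposal is correct, and it follows the same basic reduction as the paper: both arguments begin by integrating Eq.~(\ref{1}) to obtain the equivalent Volterra integral equation $u(x)=u_0+\int_{x_0}^{x}\bigl(g(z,u(z))+\int_{x_0}^{z}K(z,t,u(t-\tau))\,dt\bigr)dz$ and both rest on the observation that the combined kernel $G(z,u(z),u(z-\tau))=g(z,u(z))+\int_{x_0}^{z}K(z,t,u(t-\tau))\,dt$ inherits continuity and a Lipschitz bound $L_1\|u_1-u_2\|+aL_2\|u_1(\cdot-\tau)-u_2(\cdot-\tau)\|$ from the hypotheses on $g$ and $K$. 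The difference is what happens next: the paper stops at this point and invokes Theorem~1 of \cite{patade2020novel} as a black box, whereas you carry the fixed-point argument through explicitly --- setting up the operator $T$ on $C([x_0,x_0+a])$, establishing the contraction estimate $\|Tu_1-Tu_2\|_\infty\le(L_1a+\tfrac{1}{2}L_2a^2)\|u_1-u_2\|_\infty$, and closing the argument either by shrinking $a$ and continuing stepwise or by the Bielecki-norm/iterated-operator device. In effect you have in-lined the proof of the cited theorem; this buys a self-contained argument (and correctly flags the one delicate point, namely that differences of the delayed arguments vanish on the history interval $[-\tau,x_0]$ and are otherwise dominated by $\|u_1-u_2\|_\infty$, so the double integral contributes only an $O(a^2)$ term), at the cost of being longer than the paper's citation-based proof. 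Your factorial bound $\|T^nu_1-T^nu_2\|_\infty\le\frac{(Ca)^n}{n!}\|u_1-u_2\|_\infty$ is the cleaner of your two closing options, since it gives uniqueness on the whole interval $|x-x_0|\le a$ without any smallness restriction on $a$, which is exactly what the theorem as stated requires.
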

\begin{proof} Integrating eq. (\ref{1}) and using initial condition, we get
\begin{eqnarray*}
	u(x) &=& u_0+\int_{x_0}^{x}g(z,u(z))dz+\int_{x_0}^{x}\int_{x_0}^{z}K(z,t,u(t-\tau))dt dz\\
	&=& u_0+\int_{x_0}^{x}\bigg(g(z,u(z))+\int_{x_0}^{z}K(z,t,u(t-\tau))\bigg)dt dz\\
	&=& u_0+\int_{x_0}^{x}G(z,u(z),u(z-\tau))dz,
\end{eqnarray*}
where $G(z,u(z),u(z-\tau))=g(z,u(z))+\displaystyle\int_{x_0}^{z}K(z,t,u(t-\tau))dt.$ We have the following observations:
\begin{enumerate}
	\item $u_0$ is continuous because it is constant,
	\item Kernel G is continuous for $0\leq x \leq a,$ because $g$ and $K$ are continuous in the same domain,
	\item G satisfies Lipschitz condition:
	\begin{equation*}
	\begin{split}
	\parallel G(x,u_1(x),&u_1(x-\tau))-G(x,u_2(x),u_2(x-\tau))\parallel=\parallel g(x,u_1(x))\\
	&+\displaystyle\int_{x_0}^{x}K(x,t,u_1(t-\tau))dt-g(x,u_2(x))-\int_{x_0}^{x}K(x,t,u_2(t-\tau))dt\parallel\\
	&\leq \parallel g(x,u_1(x))-g(x,u_2(x))\parallel+\parallel\displaystyle\int_{x_0}^{x}K(x,t,u_1(t-\tau))dt\\
	&+\int_{x_0}^{x}K(x,t,u_2(t-\tau))dt\parallel\\
	&\leq L_1 \parallel u_1-u_2 \parallel + a L_2 \parallel u_1(x-\tau)- u_2(x-\tau) \parallel ~ ~ ~ (\because \|x-x_0\|\leq a)\\
	\end{split}
	\end{equation*}
\end{enumerate}
Thus all the conditions of Theorem (1) in \cite{patade2020novel} are satisfied. Hence, The IVP (\ref{1}) has a unique solution.
\end{proof}
\begin{thm}\label{1.6}
	Assume that $g\in C[I\times\mathbb{R}^n,\mathbb{R}^n]$, $K\in C[I\times I\times\mathbb{R}^n,\mathbb{R}^n]$ and $\int_{s}^x \mid K(t,s,u(s))\mid dt \leq N$, for $x_0\leq s \leq x \leq x_0 + a$, $u\in \Omega = \{\phi\in C[I,\mathbb{R}^n]: \phi(x_0) = x_0 \  and \  \mid\phi(x) - u_0\mid \leq b\}$. Then IVP (\ref{1}) possesses at least one solution.
\end{thm}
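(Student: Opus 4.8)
The plan is to recast the IVP as a fixed-point problem and apply Schauder's fixed point theorem. Since only continuity of $g$ and $K$ is assumed (no Lipschitz bound), the contraction/Picard argument underlying Theorem~\ref{11} is unavailable, so a compactness-based fixed point theorem is the natural tool. First I would integrate Eq.~(\ref{1}) exactly as in the proof of Theorem~\ref{11} to obtain the equivalent integral equation, and define the operator $T$ on $\Omega$ by
\begin{equation*}
(Tu)(x) = u_0 + \int_{x_0}^{x}\left( g(z,u(z)) + \int_{x_0}^{z} K(z,s,u(s))\,ds\right)dz,
\end{equation*}
so that a fixed point of $T$ is precisely a solution of (\ref{1}). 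Since $\Omega$ is a nonempty, closed, bounded, convex subset of the Banach space $C[I,\mathbb{R}^n]$, it suffices to verify that $T$ maps $\Omega$ into itself, is continuous, and has relatively compact image.

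For the self-mapping property I would use that $g$ and $K$, being continuous on the compact sets $I\times B$ and $I\times I\times B$ with $B=\{w\in\mathbb{R}^n:\mid w-u_0\mid\leq b\}$, are bounded there, say $\mid g\mid\leq M_1$ and $\mid K\mid\leq M_2$, together with the hypothesis $\int_{s}^{x}\mid K(t,s,u(s))\mid dt\leq N$. The key manoeuvre is to interchange the order of integration in the iterated Volterra integral,
\begin{equation*}
\left|\int_{x_0}^{x}\int_{x_0}^{z}K(z,s,u(s))\,ds\,dz\right|\leq\int_{x_0}^{x}\int_{s}^{x}\mid K(z,s,u(s))\mid dz\,ds\leq\int_{x_0}^{x}N\,ds=N(x-x_0),
\end{equation*}
which is exactly where the assumption on $K$ enters. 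Combining with the bound on $g$ gives $\mid(Tu)(x)-u_0\mid\leq(M_1+N)(x-x_0)$, so after restricting to a subinterval $[x_0,x_0+\alpha]$ with $\alpha=\min\{a,\,b/(M_1+N)\}$ one obtains $\mid(Tu)(x)-u_0\mid\leq b$ and $(Tu)(x_0)=u_0$, i.e. $T$ maps the restricted set into itself.

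Next I would establish continuity of $T$ and relative compactness of its image. Continuity follows because uniform convergence $u_n\to u$ in $\Omega$ forces $g(z,u_n(z))\to g(z,u(z))$ and $K(z,s,u_n(s))\to K(z,s,u(s))$ uniformly on the compact domains, whence the single and double integrals converge uniformly. For compactness I would invoke the Arzel\`a--Ascoli theorem: the image is uniformly bounded by the previous step, and for $x_1<x_2$ the estimate
\begin{equation*}
\mid(Tu)(x_2)-(Tu)(x_1)\mid\leq\int_{x_1}^{x_2}\mid g(z,u(z))\mid dz+\int_{x_1}^{x_2}\int_{x_0}^{z}\mid K(z,s,u(s))\mid ds\,dz\leq(M_1+M_2 a)\mid x_2-x_1\mid
\end{equation*}
yields equicontinuity uniform in $u\in\Omega$. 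Hence $T(\Omega)$ is relatively compact, and Schauder's fixed point theorem supplies $u^{*}\in\Omega$ with $Tu^{*}=u^{*}$, a solution of (\ref{1}) on $[x_0,x_0+\alpha]$.

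I expect the main obstacle to lie in the careful bookkeeping for the iterated Volterra integral: the hypothesis controls $\int_s^x\mid K\mid dt$ (integration over the \emph{first} kernel argument), which is what the Fubini swap exploits for the self-map bound, whereas the equicontinuity estimate instead needs the plain boundedness $\mid K\mid\leq M_2$ coming from continuity on the compact domain. One must also accept that the self-map property may only hold after shrinking to $[x_0,x_0+\alpha]$, so the conclusion is local existence, consistent with the wording ``at least one solution.'' The remaining checks (continuity of $T$ and verification of the Arzel\`a--Ascoli hypotheses) are routine once compactness of the relevant domains is used to pass limits under the integral sign.
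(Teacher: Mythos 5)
Your argument is correct and complete, but note that the paper itself offers no proof here: it simply remarks that the result can be established in a similar manner as in \cite{lakshmikantham1995theory}, whose local existence theorem this statement transcribes almost verbatim (down to the misprint $\phi(x_0)=x_0$, which should read $\phi(x_0)=u_0$). The proof in that reference is of Peano--Tonelli type: one constructs $\epsilon$-approximate solutions with a retarded argument, extracts a uniformly convergent subsequence by Arzel\`a--Ascoli, and passes to the limit in the integral equation. Your route through Schauder's fixed point theorem is a genuinely different, equally standard alternative: instead of building approximants, you verify that the Volterra operator $T$ is a continuous self-map of the closed, bounded, convex set $\Omega$ restricted to $[x_0,x_0+\alpha]$ with $\alpha=\min\{a,\,b/(M_1+N)\}$, and that $T(\Omega)$ is relatively compact, so the same Arzel\`a--Ascoli compactness is applied to the image of the operator rather than to a family of approximate solutions. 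You also correctly isolate the one nontrivial use of the hypothesis $\int_s^x \mid K(t,s,u(s))\mid dt \leq N$, namely the Fubini interchange in the iterated Volterra integral, which is precisely how that bound enters the cited proof as well; the Schauder route buys a shorter, self-contained argument, while the approximation route buys constructive approximants and avoids any fixed point theorem. Two minor points you should patch: a fixed point of $T$ is a priori only continuous, so add the one-line observation that continuity of the integrand makes $u^{*}$ continuously differentiable, hence a genuine solution of (\ref{1}); and Eq.~(\ref{1}) actually carries the delay $u(t-\tau)$ with history $\phi$ on $[-\tau,x_0]$, which both the theorem's hypotheses and your operator suppress --- to match the equation literally, replace $u(s)$ by $u(s-\tau)$ and note that for $x\leq x_0+\min\{\alpha,\tau\}$ the delayed argument is supplied by the known history, after which the method of steps extends the conclusion with no change to your estimates.
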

\begin{proof} One can prove this result in a similar manner as given in \cite{lakshmikantham1995theory}.
\end{proof}
\subsection{Error Analysis }
\begin{thm}\label{13}
	Let $g$ and $K$ satisfy Lipschitz condition in second and third variables with Lipschitz constants $L_1$ and $L_2$ respectively and $g$ is bounded by $M, ~ M>0$. Then the numerical method (\ref{9}) is of third order.
\end{thm}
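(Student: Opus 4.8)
The plan is to bound the one-step (local truncation) error of the scheme \eqref{9} by $O(h^3)$ and to conclude from Definition \ref{1.11} that the method is of third order. I would measure this error as the residual left when the exact solution $u$ is inserted into \eqref{9}, and split it into two independent pieces: the \emph{quadrature error}, committed when the trapezium rule replaces the two integrals in \eqref{2} by the sums in \eqref{4.1}, and the \emph{DGJ truncation error}, committed when the three-term sum \eqref{9} replaces the exact solution of the implicit relation \eqref{4.1}.

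For the quadrature error I would invoke the classical trapezium error formula. On the single step $[x_j,x_{j+1}]$ it contributes $-\tfrac{h^3}{12}\tfrac{d^2}{dx^2}[\,\cdot\,]$, so, assuming enough smoothness of $g$, $K$ and $u$ for the relevant second derivatives to exist, both the $g$-integral and the outer application to the double integral are $O(h^3)$ per step. The composite trapezium rule used on the inner Volterra integral over $[x_0,x]$ is only $O(h^2)$, but it enters carrying the outer weight $\tfrac{h}{2}$ and therefore also contributes $O(h^3)$. This recovers exactly the $O(h^3)$ remainder already displayed in \eqref{3}.

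The heart of the argument is the DGJ truncation error. Writing \eqref{4.1} in the form \eqref{2.1} with the known part $g=M_1$, the linear operator $L=0$ and $N(u)=\tfrac{h}{2}g(x_{j+1},u)$, the iterates \eqref{2.6} become $u_0=M_1$, $u_1=\tfrac{h}{2}g(x_{j+1},M_1)$ and $u_{m+1}=N\big(\sum_{i=0}^{m}u_i\big)-N\big(\sum_{i=0}^{m-1}u_i\big)$; in particular $u_0+u_1=M_2$ and $u_0+u_1+u_2$ reproduces \eqref{9}. Since $N$ carries the factor $\tfrac{h}{2}$ and $g$ is Lipschitz with constant $L_1$, I would prove by induction the contraction $\|u_{m+1}\|\le \tfrac{h}{2}L_1\|u_m\|$, which together with $\|u_1\|\le \tfrac{h}{2}M$ (using $\|g\|\le M$) gives the geometric bound $\|u_m\|\le \big(\tfrac{hL_1}{2}\big)^{m-1}\tfrac{h}{2}M$. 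Summing the tail $\sum_{m\ge 3}\|u_m\|$ then bounds the difference between the exact solution of \eqref{4.1} and the three-term approximation \eqref{9} by a constant multiple of $h^3$, once $h$ is small enough that $\tfrac{hL_1}{2}<1$.

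Adding the two estimates yields a local error of $O(h^3)$, which is what the theorem means by third order. If one wishes to pass to the accumulated error of Definition \ref{1.11} directly, one sets $e_j=u(x_j)-u_j$ and derives from \eqref{9} a recursion in which the $g$-terms are controlled by $L_1$ and the memory sums by $L_2$ (the latter governing how the delayed history errors $e_{i-M}$ propagate forward through $K$), and then closes it with a discrete Gronwall inequality. I expect the decisive technical obstacle to be the DGJ tail bound: one must check that the contraction $\|u_{m+1}\|\le\tfrac{h}{2}L_1\|u_m\|$ and the bound $\|u_1\|\le\tfrac{h}{2}M$ hold uniformly in the grid index $j$, and that the ratio $\tfrac{hL_1}{2}$ stays below $1$ as $N\to\infty$, for otherwise the geometric summation that produces the clean $h^3$ remainder would fail.
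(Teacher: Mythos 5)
Your proposal is correct and follows essentially the same route as the paper: the one-step error is split into the $O(h^3)$ trapezium remainder already displayed in (\ref{3}) and the DGJ truncation error, and the latter is controlled by the factor $\tfrac{h}{2}L_1$ that each DGJ corrector inherits from $N(u)=\tfrac{h}{2}g(x_{j+1},u)$ together with $|g|\le M$ --- your geometric bound $\|u_m\|\le\bigl(\tfrac{hL_1}{2}\bigr)^{m-1}\tfrac{h}{2}M$ is precisely the paper's explicit chain $|u(x_{j+1})-u_{j+1}|\le\tfrac{h}{2}L_1|u_{j+1}-M_2|\le\tfrac{h^2}{4}L_1^2|M_2-M_1|\le\tfrac{h^3}{8}L_1^2M$ written for general $m$ and summed as a tail. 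The only material you add is the closing remark about passing from the local estimate to the accumulated error of Definition \ref{1.11} via a discrete Gronwall inequality, a step the paper itself skips by applying the definition directly to the one-step bound.
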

\begin{proof}
Suppose  $u_{j+1}$ is an approximation to $u(x_{j+1})$. By using Eq.(\ref{3}) and (\ref{9}), we obtain
\begin{eqnarray*}
	\mid u(x_{j+1})-u_{j+1}\mid &=& \mid\frac{h}{2} g(x_{j+1}, u_{j+1})-\frac{h}{2} g(x_{j+1}, M_2)\mid + O(h^3)\\
	&\leq& \frac{h}{2}L_1 \mid u_{j+1}-M_2\mid + O(h^3).
\end{eqnarray*}
Using Eq.(\ref{7}) and (\ref{9}), we get
\begin{eqnarray*}
	\mid u(x_{j+1})-u_{j+1}\mid &\leq& \frac{h}{2}L_1 \mid \frac{h}{2} g(x_{j+1}, M_2) - \frac{h}{2} g(x_{j+1}, M_1)\mid+ O(h^3)\\
	&\leq& \frac{h^2}{4}L_1 \mid g(x_{j+1}, M_2) - g(x_{j+1}, M_1)\mid+ O(h^3)\\
	&\leq& \frac{h^2}{4}L_1 \mid M_2 - M_1\mid+ O(h^3)\\
	&\leq& \frac{h^2}{4}L_1 \bigg(\frac{h}{2}\mid g(x_{j+1}, M_1)\mid\bigg)+ O(h^3)\\
	&\leq& h^3 \bigg(\frac{L_1}{8}\mid g(x_{j+1}, M_1)\mid\bigg)+ O(h^3) \leq h^3 \bigg(\frac{L_1}{8}M\bigg)+ O(h^3)\\
	& \leq & Ch^3,
\end{eqnarray*}
where $C$ is some constant.
$\Rightarrow$ The numerical method (\ref{9}) is of third order.
\end{proof}
\begin{cor}\label{14}
	The numerical method (\ref{9}) is convergent.
\end{cor}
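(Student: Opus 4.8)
The plan is to deduce convergence directly from the third-order accuracy already established in Theorem \ref{13}, together with the definition of convergence recorded in Definition \ref{1.11}. First I would invoke Theorem \ref{13}, which asserts that the numerical method (\ref{9}) is of order three. By part (ii) of Definition \ref{1.11}, being of order $m=3$ means precisely that there exists a finite constant $C$ such that $\mid u(x_i)-u_i\mid \leq C h^3$ for every $i=1,2,\ldots,N$. This inequality is exactly the output of Theorem \ref{13}, so no new estimate has to be produced.

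Next I would pass to the limit $h\rightarrow 0$ in this bound. Since the right-hand side satisfies $C h^3 \rightarrow 0$ as $h\rightarrow 0$ while the left-hand side is nonnegative, a squeeze argument yields $\lim_{h\rightarrow 0}\mid u(x_i)-u_i\mid = 0$ for each fixed index $i$. This is precisely the convergence criterion (\ref{1.1}) of part (i) of Definition \ref{1.11}, and the corollary follows.

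I do not anticipate any genuine obstacle, as the statement is an immediate consequence of the order estimate. The only point that merits a moment's care is the interpretation of the constant $C$: one must read the bound $\mid u(x_i)-u_i\mid \leq C h^3$ as a global error estimate, uniform over the $N=X/h$ grid points, rather than as a purely one-step local truncation bound. In the framework of Definition \ref{1.11}, however, the order is itself defined through the global error $\mid u(x_i)-u_i\mid$, so the estimate supplied by Theorem \ref{13} is exactly of the form required and no separate accumulation-of-errors argument is needed to complete the proof.
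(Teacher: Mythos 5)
Your proposal is correct and follows exactly the paper's own route: Corollary \ref{14} is deduced by combining the third-order estimate of Theorem \ref{13} with Definition \ref{1.11}, letting $h\rightarrow 0$ in the bound $\mid u(x_i)-u_i\mid\leq Ch^3$. The paper states this in one line; your version merely makes the squeeze argument explicit, so there is nothing further to add.
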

\begin{proof} By Theorem (\ref{13}) and definition (\ref{1.11}), the  numerical method (\ref{9}) is convergent.
\end{proof}	
\section{Illustrative examples}\label{example}
\begin{Ex}\label{ex1}
	Consider the linear VIDE with delay
	\begin{align*}
	u'(x) &= e^{-1}(1-e^{x}) +  u(x) + \int_{0}^x u(t-1)dt,\quad u(0) = 1, \quad 0\le x\le 1, \\
	u(x) &=e^{x}, \quad x<0.
	\end{align*}
\end{Ex}
The exact solution is $	u(x) = e^{x}$.

\begin{center}
	\textbf{Table 1} Absolute errors and CPU times used for different values of  $h$\\	
	\begin{tabular}{cccc}
		\hline
		$x$ & Ab. error for  $h=0.01$ & Ab. error for  $h=0.02$ &  Ab. error for  $h=0.1$\\
		& CPU time  $0.0523884$ Sec. & CPU time $0.0139188$ Sec.& CPU time  $0.0021009$ Sec.\\
		\hline
		0.1 & $1.98103\times10^{-5}$& $7.88502\times10^{-5}$ & $1.89658\times10^{-3}$ \\
	
		0.2 & $2.15173\times10^{-5}$ & $8.5647\times10^{-5}$ & $2.06066\times10^{-3}$ \\
	
		0.3 & $2.34488\times10^{-5}$ & $9.33384\times10^{-5}$ & $2.24651\times10^{-3}$\\
	
		0.4 & $2.5633\times10^{-5}$ &  $1.02037\times10^{-4}$ & $2.45688\times10^{-3}$ \\
		
		0.5 & $2.81019\times10^{-5}$ & $1.11871\times10^{-4}$ & $2.69489\times10^{-3}$ \\
	
		0.6 & $3.08911\times10^{-5}$ & $1.22981\times10^{-4}$ & $2.96401\times10^{-3}$\\
	
		0.7 & $3.40406\times10^{-5}$ & $1.35527\times10^{-4}$ & $3.26816\times10^{-3}$ \\
		
		0.8 & $3.75955\times10^{-5}$ & $1.4969\times10^{-4}$ & $3.61174\times10^{-3}$  \\
		
		0.9 & $4.16061\times10^{-5}$ &  $1.65669\times10^{-4}$ & $3.99966\times10^{-3}$  \\
	
		1 & $4.6129\times10^{-5}$ & $1.83692\times10^{-4}$ & $4.43746\times10^{-3}$\\
		\hline
	\end{tabular}\\
\end{center}

\begin{tabular}{ll}
	\includegraphics[scale=0.55]{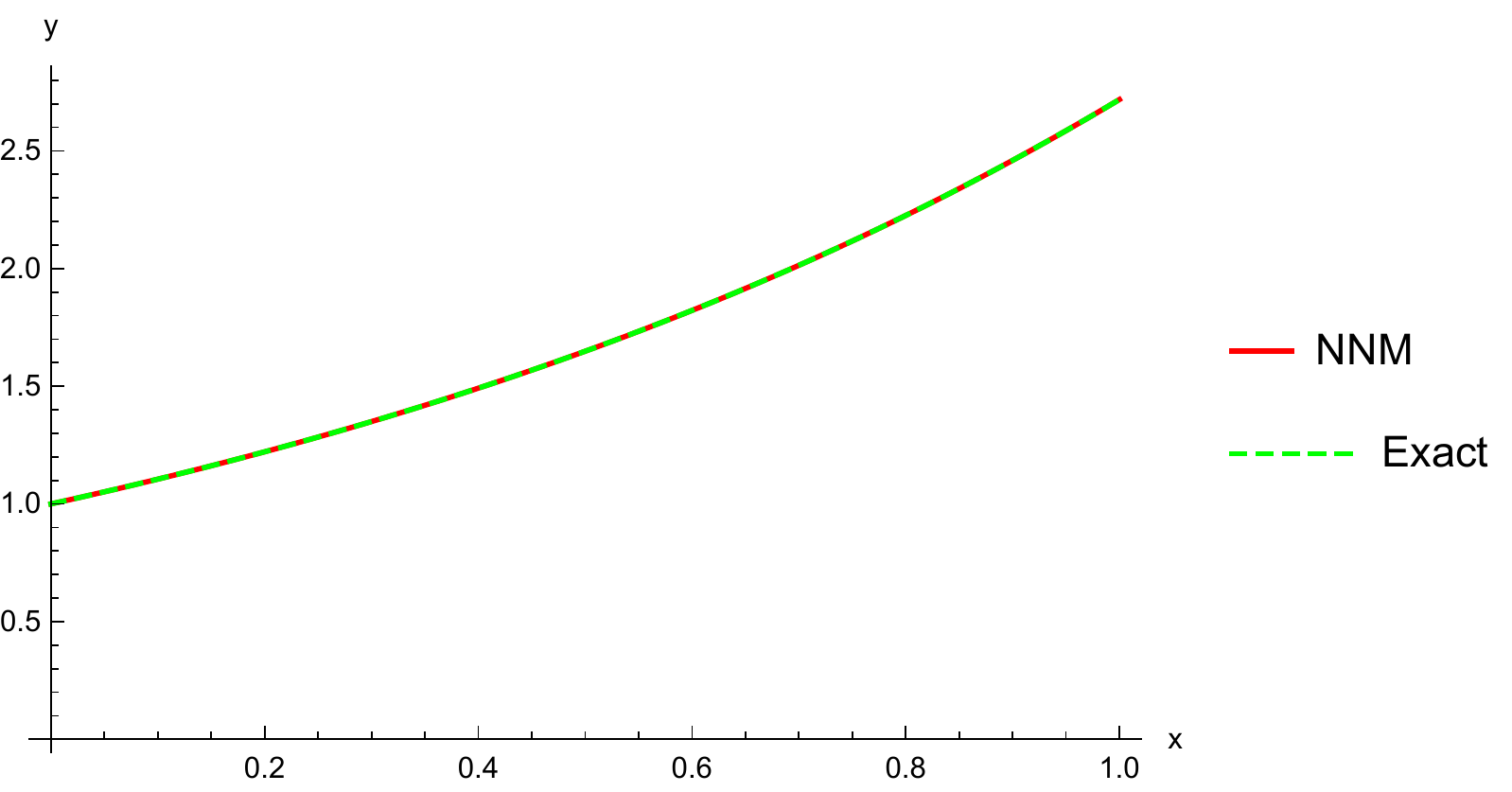} & \includegraphics[scale=0.55]{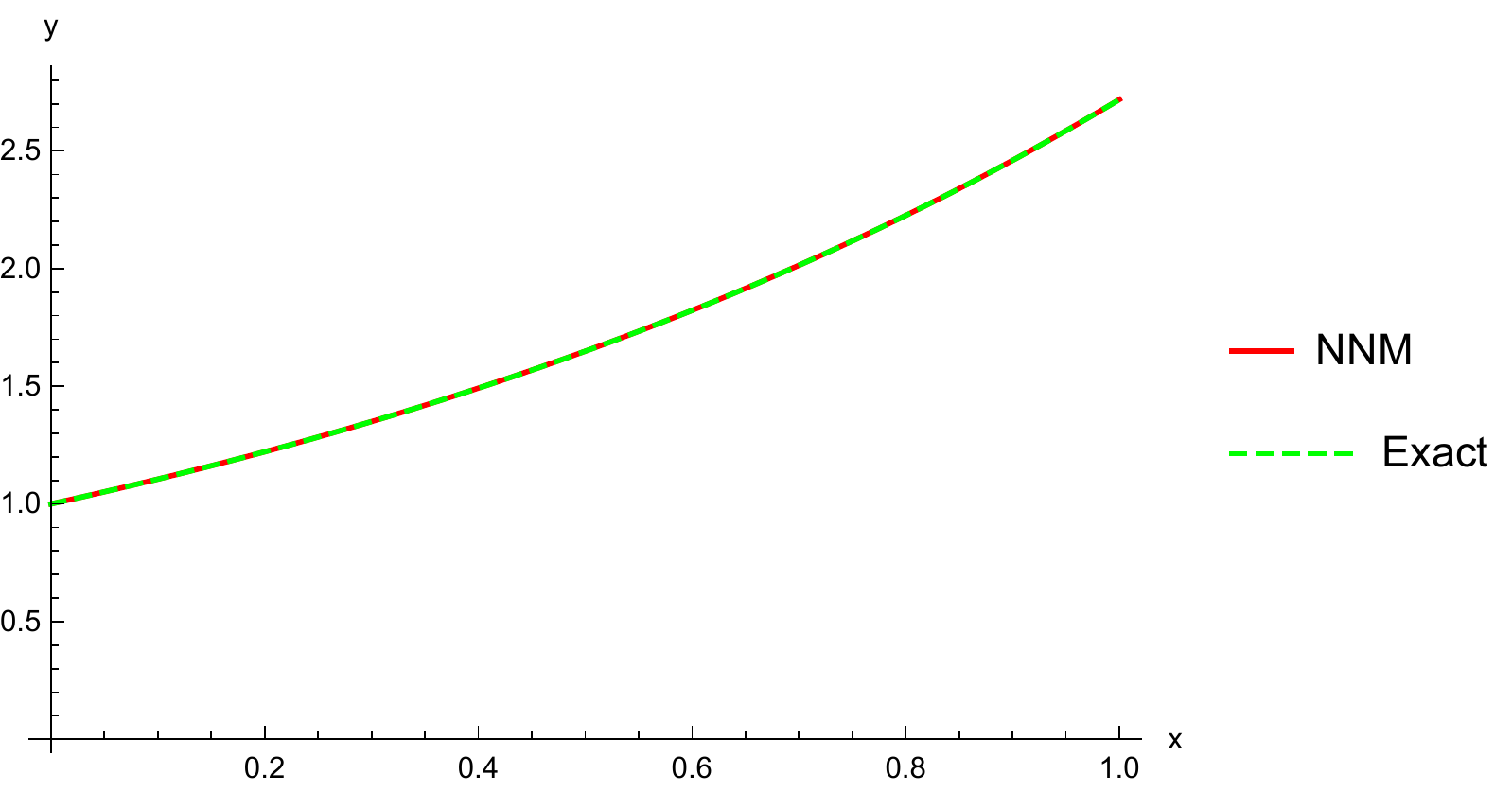}\\
	Fig.1: Comparison of solutions $u(x)$ of (\ref{ex1}) for $h=0.01$. & Fig.2: Comparison of solutions $u(x)$ of (\ref{ex1}) for $h=0.02$.\\
\end{tabular}

\begin{center}
	\begin{tabular}{c}
		\includegraphics[scale=0.55]{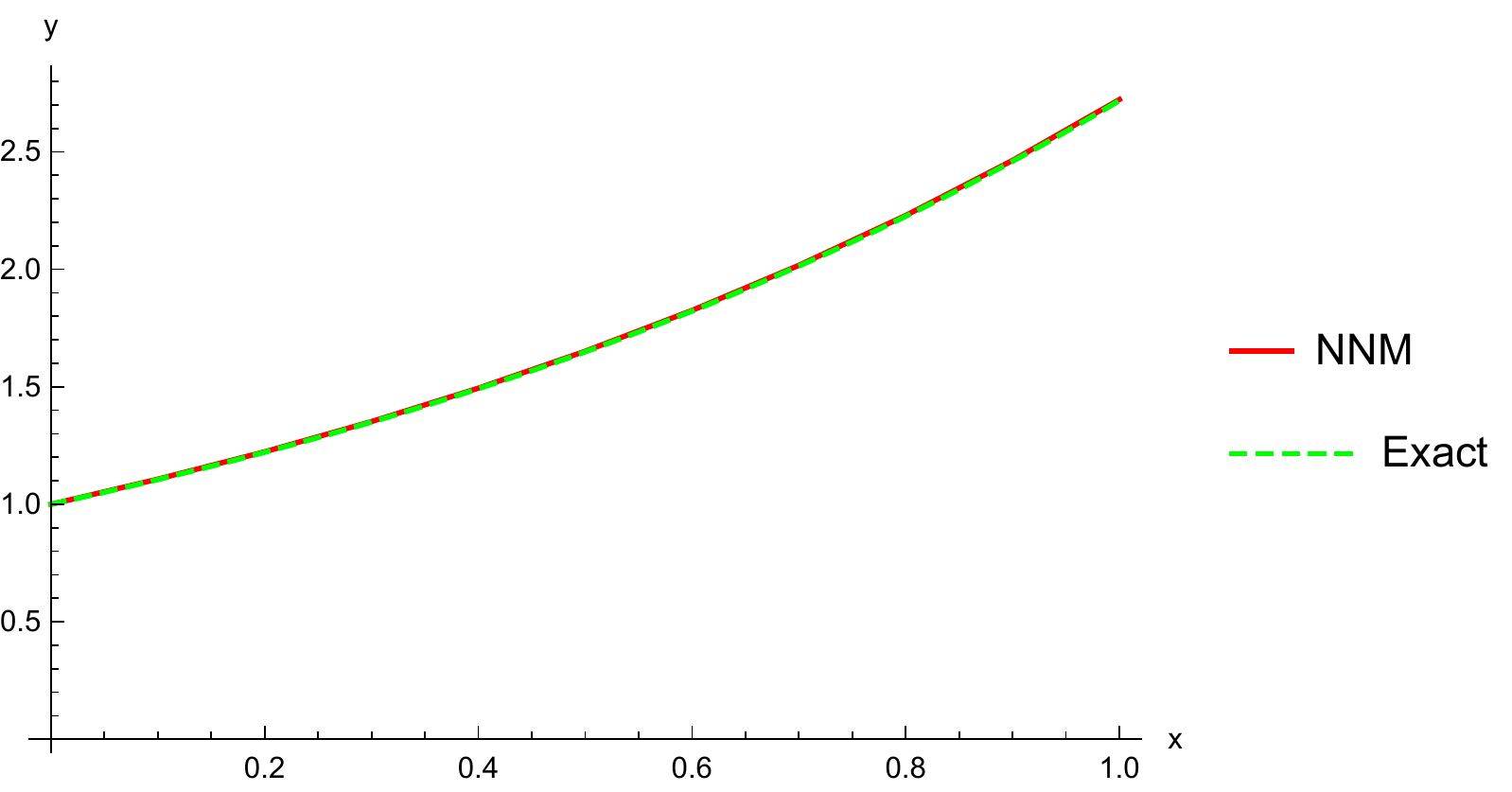}\\
		Fig.3: Comparison of solutions $u(x)$ of (\ref{ex1}) for $h=0.1$.\\
	\end{tabular}
\end{center}
 We compare our solution with exact solution for different values of $h$ in Figs.1-3 and the absolute errors, CPU time in Table 1. It is observed that NNM solution coincides with exact solution and the proposed method is time efficient.

\begin{Ex}\label{ex2}
	Consider the non-linear VIDE with delay
	\begin{align*}
	u'(x) &= -e^{x}\sinh x +  u(x) + \int_{0}^x u^2(t-1)dt,\quad u(0) = 1, \quad 0\le x\le 1, \\
	u(x) &=e^{x+1}, \quad x<0.
	\end{align*}
\end{Ex}
The exact solution is $	u(x) = e^{x+1}$.

\begin{center}
	\textbf{Table 2} Absolute errors and CPU times used for different values of  $h$\\	
	\begin{tabular}{cccc}
		\hline
		$x$ & Ab. error for  $h=0.01$ & Ab. error for  $h=0.02$ &  Ab. error for  $h=0.1$\\
		& CPU time  $0.0551474$ Sec. & CPU time $0.0158238$ Sec.& CPU time  $0.001088$ Sec.\\
		\hline
		0.1 & $5.39924\times10^{-5}$& $2.1491\times10^{-4}$ & $5.17093\times10^{-3}$ \\
		
		0.2 & $5.91385\times10^{-5}$ & $2.35415\times10^{-4}$ & $5.66983\times10^{-3}$ \\
		
		0.3 & $6.54017\times10^{-5}$ & $2.60385\times10^{-4}$ & $6.27996\times10^{-3}$\\
		
		0.4 & $7.30438\times10^{-5}$ &  $2.90867\times10^{-4}$ & $7.02773\times10^{-3}$ \\
		
		0.5 & $8.2388\times10^{-5}$ & $3.28155\times10^{-4}$ & $7.94573\times10^{-3}$ \\
	
		0.6 & $9.38329\times10^{-5}$ & $3.73844\times10^{-4}$ & $9.07421\times10^{-3}$\\
		
		0.7 & $1.0787\times10^{-4}$ & $4.29901\times10^{-4}$ & $1.04628\times10^{-2}$ \\
	
		0.8 & $1.25104\times10^{-4}$ & $4.98748\times10^{-4}$ & $1.21727\times10^{-2}$  \\
	
		0.9 & $1.4628\times10^{-4}$ &  $5.83369\times10^{-4}$ & $1.42792\times10^{-2}$  \\
		
		1 & $1.72316\times10^{-4}$ & $6.87436\times10^{-4}$ & $1.68753\times10^{-2}$\\
		\hline
	\end{tabular}\\
\end{center}

\begin{tabular}{ll}
	\includegraphics[scale=0.55]{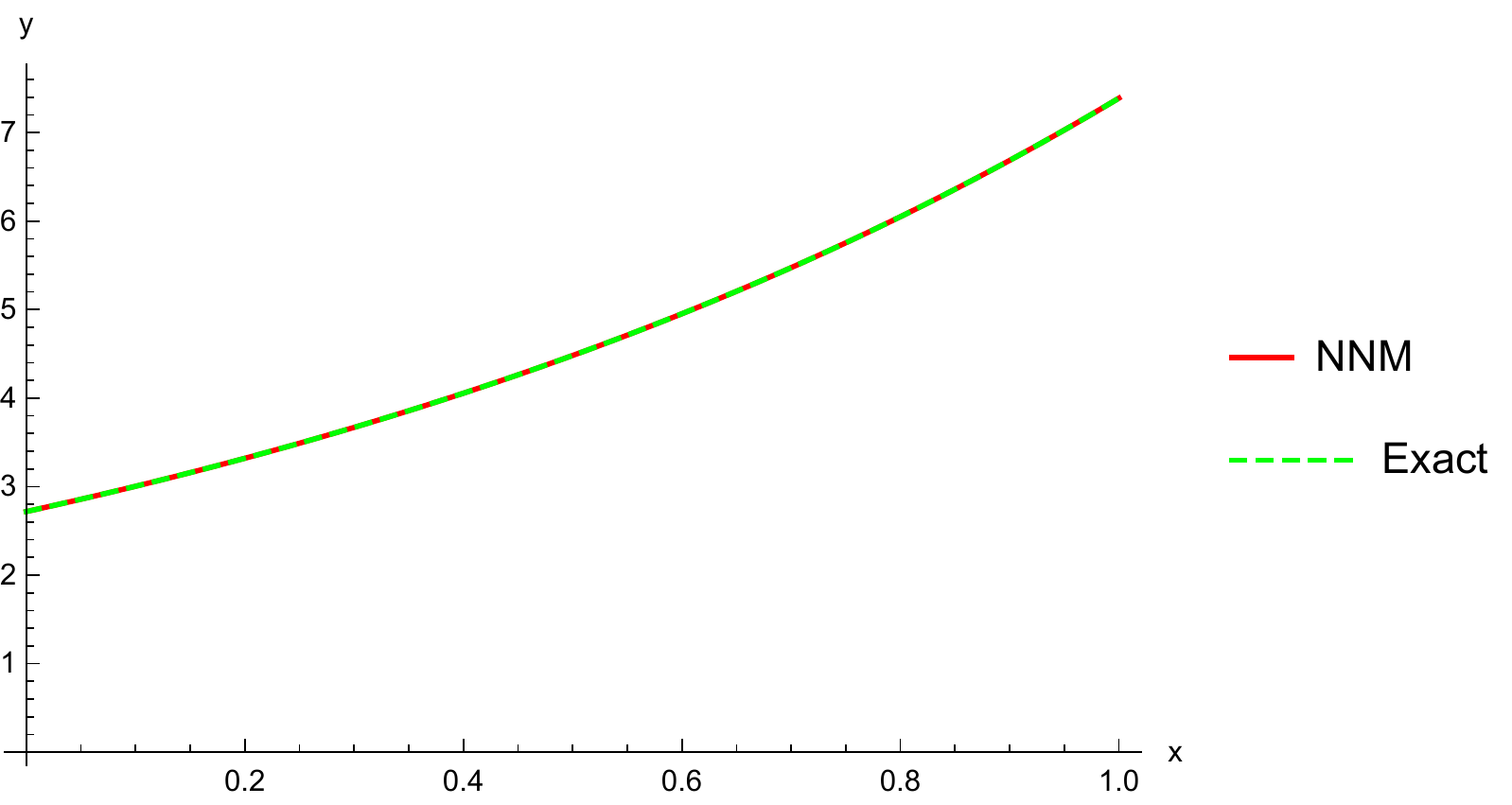} & \includegraphics[scale=0.55]{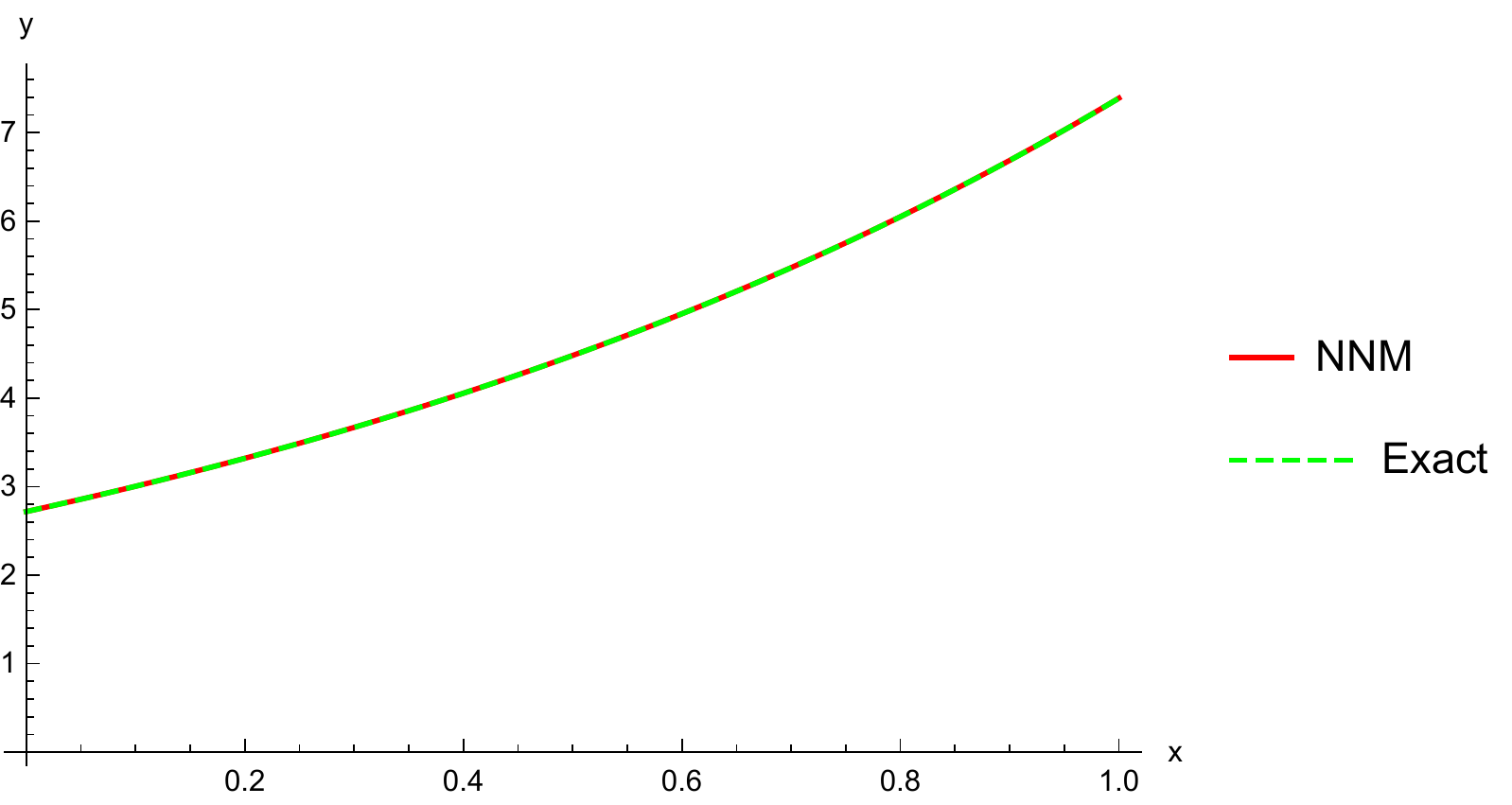}\\
	Fig.4: Comparison of solutions $u(x)$ of (\ref{ex2}) for $h=0.01$. & Fig.5: Comparison of solutions $u(x)$ of (\ref{ex2}) for $h=0.02$.\\
\end{tabular}

\begin{center}
	\begin{tabular}{c}
		\includegraphics[scale=0.55]{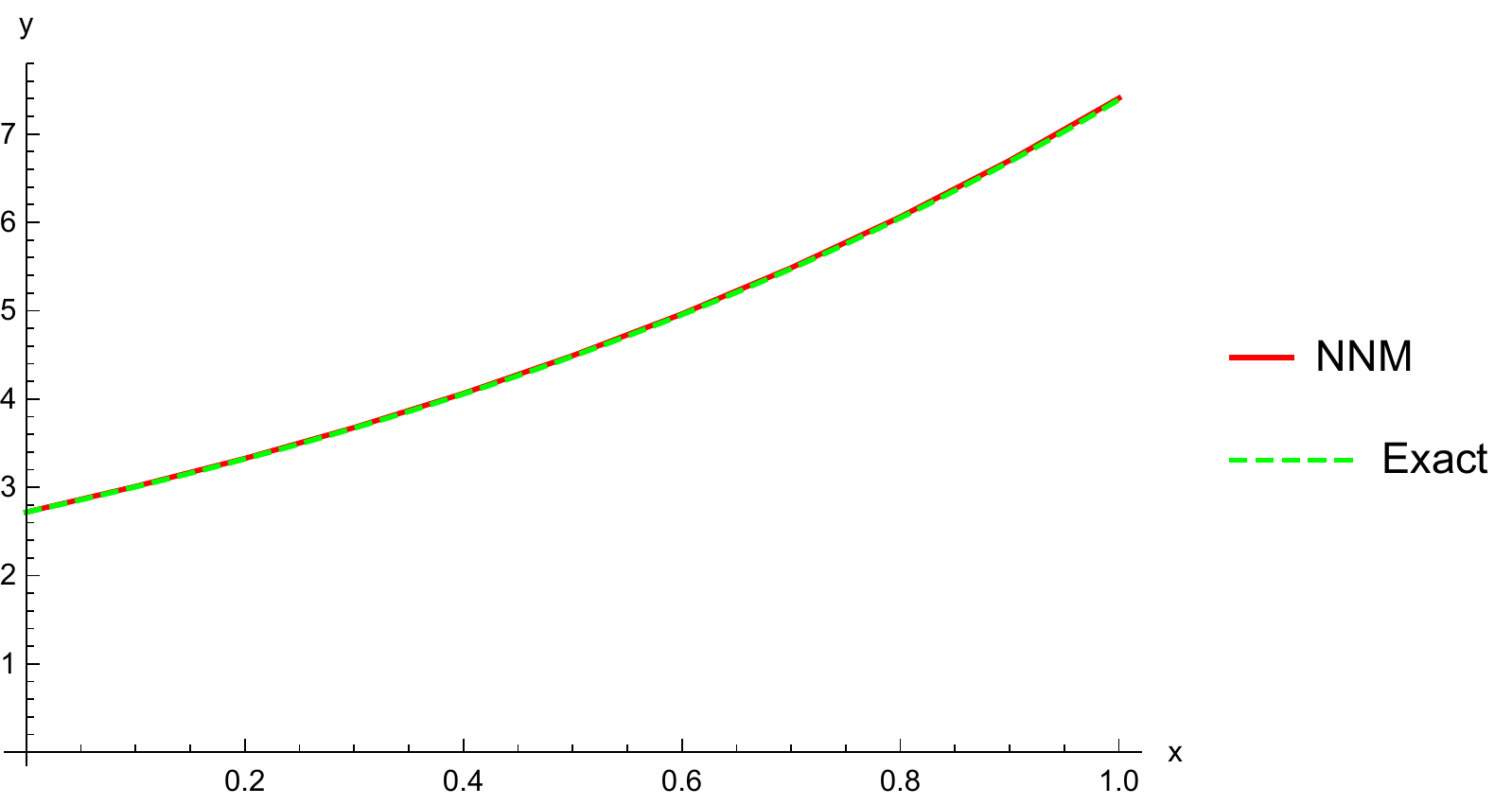}\\
	Fig.6: Comparison of solutions $u(x)$ of (\ref{ex2}) for $h=0.1$.\\
	\end{tabular}
\end{center}
 We compare our solution with exact solution for different values of $h$ in Fig.4-6. and the absolute errors, CPU time in Table 2. It is observed that NNM solution coincides with exact solution and the proposed method is time efficient.

\section{Conclusion}\label{concl}
A new numerical method is developed for solving nonlinear Volterra delay integro-differential equations (VDIDEs) of the form
\begin{equation}
u'(x) = g(x,u(x)) + \int_{x_0}^x K(x,t, u(t-\tau))dt, \quad u(x)=\phi(x) \text{ for } x \in [-\tau,x_0].
\end{equation}
Existence-uniqueness theorem is derived for solution of VDIDEs and  error and convergence analysis of the proposed method is presented. Efficiency of the proposed method is illustrated with various examples and it is observed that they are in very good agreement with the exact solutions.

\subsection*{Acknowledgements}
Aman Jhinga acknowledges Council of Scientific and Industrial Research, India for Senior Research Fellowship. Jayvant Patade acknowledges the Savitribai Phule Pune University, Pune, India for the postdoctoral fellowship [SPPU-PDF/ST/MA/2019/0001]. 

\section*{References}
\bibliography{references}
\bibliographystyle{acm.bst}

\end{document}